\documentclass[12pt]{amsart}
\usepackage{amsmath,amsfonts,amssymb,amscd,amsthm,amsbsy,epsf}
\textwidth=6.25truein
\textheight=8.5truein
\hoffset=-.5truein
\voffset=-.5truein
\footskip=.5truein
\newtheorem{thm}{Theorem}[section]

\newtheorem{cor}[thm]{Corollary}

\theoremstyle{definition} 		
\newtheorem{defn}[thm]{Definition}
\newtheorem{remark}[thm]{Remark}
\newtheorem{remarks}[thm]{Remarks}
\newtheorem*{note}{Notation}

\def\nat{{\mathbb N}}
\def\zat{{\mathbb Z}}
\def\qat{{\mathbb Q}}

\def\B{{\mathcal B}}

\def\F{{\mathcal F}}

\def\Tau{{\mathfrak T}}

\begin{document}
\title{Density theorems for rational numbers}
\author{Andreas Koutsogiannis}

\begin{abstract}
Introducing the notion of a rational system of measure preserving transformations and proving a recurrence result for such systems, we give sufficient conditions in order a subset of rational numbers to contain arbitrary long arithmetic progressions.
\end{abstract}

\keywords{IP-systems, density-type theorems, rational numbers}
\subjclass{Primary 11R45; 28D05}

\maketitle
\baselineskip=18pt
\pagestyle{plain}


\section*{Introduction}

In 1927, van der Waerden proved (in \cite{vdW}) that for any finite partition of the set of natural numbers, there exists a cell of the partition which contains arbitrary long arithmetic progressions, which is a (perhaps the most) fundamental result of Ramsey theory. The density version of the van der Waerden theorem, that any set of positive upper density in $\nat$ possesses arbitrary long arithmetic progressions (the \textit{upper density} of a subset $A\subseteq \nat$ is defined by $\overline{d}(A)=\limsup_n \frac{|A\cap\{1,\ldots,n\}|}{n},$ where $|A|$ denotes the cardinality of $A$) was conjectured by Erd$\ddot{\text{o}}$s and Tur\'{a}n in 1930's and established by Szemer\'{e}di in 1975 (\cite{Sz}).

Furstenberg, in 1977, reproved Szemer\'{e}di's theorem, introducing a correspondence principle, which provides the link between density Ramsey theory and ergodic theory (\cite{Fu}) and proving a multiple recurrence theorem for measure preserving systems.

In this paper we will prove a multiple recurrence and two density results for the set of rational numbers giving sufficient conditions in order a set of rational numbers to contain arbitrary long arithmetic progressions. Using a representation of rational numbers (proved in \cite{BIP}), according to which every rational number can be represented as a dominated located word over an infinite alphabet, we define the notion of a rational system (Definition~\ref{rmps}). We obtain:

(a) a multiple recurrence result concerning rational systems of measure preserving transformations in Theorem~\ref{thm:2}, using the analogous result of Furstenberg-Katznelson for the IP-systems of measure preserving transformations,

(b) a sufficient condition via F{\o}lner sequences in order a subset of rational numbers to contain arbitrary long arithmetic progressions in Theorem~\ref{thm:4}, using a result of Hindman and Strauss for infinite countable left cancellative semigroups; and

(c) a density result viewing the rational numbers as located words in Theorem~\ref{l}, which follows from the density Hales-Jewett theorem of Furstenberg-Katznelson.

\begin{note}
Let $\nat=\{1,2,\ldots\}$ be the set of natural numbers, $\mathbb{Z}=\{\ldots,-2,-1,0,1,2,\ldots\}$ be the set of integer numbers,  $\qat=\{\frac{m}{n}: m\in \zat,\;n\in \nat\}$ the set of rational numbers and $\zat^\ast=\zat\setminus\{0\},$ $\qat^\ast=\qat\setminus\{0\}.$
\end{note}

\section{A multiple recurrence result for rational numbers}

In this section, using the representation of rational numbers as dominated located words over an infinite alphabet, we define (in Definition~\ref{rmps}) the rational systems and we prove, in Theorem~\ref{thm:2} below, a recurrence result for such systems for measure preserving transformations using an analogous result for IP-systems of Furstenberg-Katznelson.

According to \cite{BIP}, every rational number $q$  has a unique expression in the form $$q=\sum^{\infty}_{s=1}q_{-s}\frac{(-1)^{s}}{(s+1)!}\;+\;\sum^{\infty}_{r=1}q_{r}(-1)^{r+1}r! $$ where $(q_n)_{n \in \mathbb{Z}^\ast}\subseteq \nat\cup\{0\}$ with $\;0\leq q_{-s}\leq s$ for every $s>0$, $ 0\leq q_r\leq r$ for every $r> 0$ and $q_{-s}=q_r=0$ for all but finite many $r,s$.

So, for a non-zero rational number $q$, there exist unique $l\in \nat,$ a non-empty finite subset of non-zero integers, the \textit{domain} of $q,$ $\{t_1<\ldots<t_l\}=dom(q)\in [\zat^\ast]^{<\omega}_{>0}$ and a subset of natural numbers $\{q_{t_1},\ldots,q_{t_l}\}\subseteq \nat$ with $1\leq q_{t_i}\leq -t_i$ if $t_i<0$ and $1\leq q_{t_i}\leq t_{i}$ if $t_i> 0$ for every $1\leq i\leq l,$ such that if $dom^-(q)=\{t\in dom(q):\;t<0\}$ and $dom^+(q)=\{t\in dom(q):\;t>0\}$ to have $$ q=\sum_{t\in dom^-(q)}q_t\frac{(-1)^{-t}}{(-t+1)!}\;+\;\sum_{t\in dom^+(q)}q_t(-1)^{t+1}t!\;\;\;(\text{we set}\;\;\sum_{t\in \emptyset}=0).$$ Consequently, $q$ can be represented as the word \begin{center} $q=q_{t_1}\ldots q_{t_l}.$ \end{center} It is easy to see that $$e^{-1}-1=-\sum^{\infty}_{t=1}\frac{2t-1}{(2t)!}< \sum_{t\in dom^-(q)}q_t\frac{(-1)^{-t}}{(-t+1)!} <\sum^{\infty}_{t=1}\frac{2t}{(2t+1)!}=e^{-1}$$ and that $$ \sum_{t\in dom^+(q)}q_t(-1)^t(t+1)! \in \zat^\ast\;\;\text{if}\;\;dom^+(q)\neq\emptyset.$$

\medskip

We will now recall  the notion of IP-systems introduced by Furstenberg and Katznelson in \cite{FuKa}.

\begin{defn}
Let $\{T_n\}_{n\in \nat}$ be a set of commuting transformations of a space. To every multi-index $\alpha=\{t_1,\ldots,t_l\}\in [\nat]^{<\omega}_{>0}, \;t_1<\ldots<t_l,$ we attach the transformation \begin{center} $\mathcal{T}_{\alpha}=T_{t_1}\ldots T_{t_l}.$ \end{center} The corresponding family $\{\mathcal{T}_{\alpha}\}_{\alpha\in [\nat]^{<\omega}_{>0}}$ is an \textit{IP-system} (of transformations).

 Two IP-systems $\{\mathcal{T}^{(1)}_{\alpha}\}_{\alpha\in [\nat]^{<\omega}_{>0}},$ $\{\mathcal{T}^{(2)}_{\alpha}\}_{\alpha\in [\nat]^{<\omega}_{>0}}$ defined by $\{T^{(1)}_{n}\}_{n\in \nat}$ and $\{T^{(2)}_{n}\}_{n\in \nat}$ respectively, are \textit{commuting} if $T^{(1)}_{n}T^{(2)}_{m}=T^{(2)}_{m}T^{(1)}_{n}$ for every $n,$ $m\in \nat.$
\end{defn}

\begin{thm}[Furstenberg-Katznelson, \cite{FuKa}]\label{thm:1}
Let $\{\mathcal{T}^{(1)}_{\alpha}\}_{\alpha\in [\nat]^{<\omega}_{>0}},\ldots,\{\mathcal{T}^{(k)}_{\alpha}\}_{\alpha\in [\nat]^{<\omega}_{>0}}$ be $k$ commuting IP-systems defined by the measure preserving transformations $\{T^{(j)}_{n}\}_{n\in \nat},$ $1\leq j\leq k$ of a measure space $(X,\B,\mu)$ with $\mu(X)=1$ (i.e. $T_n^{(j)}$ is $\B$-measurable with $\mu(T_n^{(j)-1}(A))=\mu(A)$ for every $A\in\B,$ $1\leq j\leq k,$ $n\in \nat$). If $A\in \B$ with $\mu(A)>0,$ then there exists an index $\alpha$ with
\begin{center} $\mu(A\cap \mathcal{T}^{(1)-1}_{\alpha}(A)\cap\ldots \cap \mathcal{T}^{(k)-1}_{\alpha}(A))>0.$ \end{center}
\end{thm}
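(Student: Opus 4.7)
The plan is to follow the Furstenberg--Katznelson strategy for IP multiple recurrence, which reduces the conclusion to an IP-limit/averaging statement and then proves that statement via an ergodic structure theorem combined with induction through a tower of factors. The first step is to replace the qualitative conclusion by the stronger quantitative assertion that, for a suitable IP-subsystem indexed by an IP-set $\F \subseteq [\nat]^{<\omega}_{>0}$, one has $\mathrm{IP\text{-}lim}_{\alpha \in \F} \int f \cdot \mathcal{T}^{(1)}_{\alpha} f \cdots \mathcal{T}^{(k)}_{\alpha} f \, d\mu \;\geq\; c(\mu(A)) > 0$ for $f=\mathbf{1}_A$. Existence of such IP-limits along some IP-subsystem is guaranteed by Hindman's finite unions theorem applied in the compact space $L^2(\mu)$ (weak topology) to the sequence of correlation products; passing to this IP-subsystem is the standard first move in the IP setting.

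The second step is to invoke the Furstenberg--Zimmer style structure theorem for the $\sigma$-algebra $\B$ under the action generated by the IP-systems: there is a (possibly transfinite) tower of intermediate factors $\B_0 \subset \B_1 \subset \cdots \subset \B$ such that $\B_0$ is trivial, each successor $\B_{\eta+1}/\B_\eta$ is a compact extension in the IP sense, limit stages are inverse limits, and $\B$ is a weakly mixing extension of $\bigcup_\eta \B_\eta$. The argument then proceeds by induction along the tower.

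The base case is trivial: on $\B_0$ the functions involved are constants and the stated positivity is immediate from $\mu(A)>0$. At limit stages one uses standard $L^2$-density arguments to pass from approximations in $\bigcup_\eta \B_\eta$ to the full limit. At a weakly mixing extension one shows, via an IP van der Corput lemma, that the contribution of the orthogonal complement vanishes in the IP-limit, so the correlations reduce to those on the smaller factor. The compact extension step is the SZ-type argument: conditioning on $\B_\eta$, one approximates $f$ by finite sums of $\B_\eta$-almost periodic functions, rearranges the product, and uses the inductive hypothesis applied to all shifts of these approximants simultaneously, together with a pigeonhole along a sub-IP-set to recover positivity.

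The main obstacle will be the compact extension step, since one must handle $k$ commuting IP-systems at once: the almost-periodic approximation must be uniform across all $k$ actions, and the pigeonhole/return-time argument has to be carried out in the IP-setting where one can only prescribe finitely many levels of an IP-ring before being forced to pass to a further sub-IP-system (again via Hindman). All other steps (reformulation, structure theory, weak mixing vanishing, limit stages) are comparatively routine adaptations of the $\zat$-actions case once the IP-van der Corput and Hindman machinery are in place.
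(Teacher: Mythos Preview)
The paper does not prove this theorem: it is quoted as a result of Furstenberg--Katznelson with the citation \cite{FuKa} and is used as a black box to deduce Theorem~\ref{thm:2}. So there is no ``paper's own proof'' to compare your proposal against.

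Your outline is a faithful high-level sketch of the original Furstenberg--Katznelson argument (IP-limits via Hindman, the IP structure theorem with compact and weakly mixing extensions, IP van der Corput, and the SZ-step at compact extensions). As a roadmap it is accurate, though of course each step you list---especially the compact-extension step and the construction of the IP-tower of factors---is itself a substantial piece of work occupying most of \cite{FuKa}. For the purposes of the present paper, however, none of this is needed: the theorem is simply invoked, not reproved.
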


We will define the notion of a rational system (see also \cite{K}), extending the notion of an IP-system.

\begin{defn}\label{rmps} Let $\{T_n\}_{n\in \zat^{\ast}}$ be a set of commuting transformations of a set $X.$ For a non-zero rational number $q$ represented as the word $q=q_{t_1}\ldots q_{t_l},$ we define \begin{center} $\mathcal{T}^q(x)=T_{t_1}^{q_{t_1}}\ldots T_{t_l}^{q_{t_l}}(x)$ and $\mathcal{T}^0(x)=x$ for every $x\in X.$\end{center}  The corresponding family $\{\mathcal{T}^{q}\}_{q\in\qat}$ is a \textit{rational system} (of transformations).

Two rational systems $\{\mathcal{T}_1^{q}\}_{q\in\qat},$ $\{\mathcal{T}_2^{q}\}_{q\in\qat}$ defined by $\{T_{n,1}\}_{n\in \zat^{\ast}}$ and $\{T_{n,2}\}_{n\in \zat^{\ast}}$ respectively, are \textit{commuting} if $T_{n,1}T_{m,2}=T_{m,2}T_{n,1}$ for every $n,$ $m\in \zat^{\ast}.$
\end{defn}

Using Theorem~\ref{thm:1}, we can take the following:

\begin{thm}\label{thm:2}
Let $\{\mathcal{T}_1^{q}\}_{q\in\qat},\ldots,\{\mathcal{T}_k^{q}\}_{q\in\qat}$ be $k$ commuting rational systems defined by the measure preserving transformations $\{T_{n,j}\}_{n\in \zat^{\ast}},$ $1\leq j\leq k$ respectively of a measure space $(X,\B,\mu)$ with $\mu(X)=1.$ If $A\in \B$ and $\mu(A)>0$ then there exists $q\in \qat^\ast$ with
\begin{center} $\mu(A\cap (\mathcal{T}_1^q)^{-1}(A)\cap\ldots \cap (\mathcal{T}_k^q)^{-1}(A))>0.$ \end{center}
\end{thm}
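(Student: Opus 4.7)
The plan is to reduce Theorem~\ref{thm:2} to the Furstenberg--Katznelson IP-recurrence theorem (Theorem~\ref{thm:1}) by enumerating the index set $\zat^*$ and observing that, if one restricts to rational words all of whose coefficients equal $1$, a rational system behaves exactly like an IP-system. Concretely, I would fix a bijection $\tau:\nat\to\zat^*$, $n\mapsto\tau_n$, and for every $1\leq j\leq k$ and $n\in\nat$ set $S_n^{(j)}:=T_{\tau_n,j}$. Since within each rational system the generating transformations commute and are measure preserving, each family $\{S_n^{(j)}\}_{n\in\nat}$ gives rise to an IP-system $\{\mathcal{S}_\alpha^{(j)}\}_{\alpha\in[\nat]^{<\omega}_{>0}}$ of measure preserving transformations, and the hypothesis $T_{n,j_1}T_{m,j_2}=T_{m,j_2}T_{n,j_1}$ across the rational systems translates verbatim into pairwise commutativity of these $k$ IP-systems.

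Applying Theorem~\ref{thm:1} to these $k$ commuting IP-systems and to the set $A$ produces a non-empty index $\alpha=\{n_1<\cdots<n_l\}\in[\nat]^{<\omega}_{>0}$ with
$$\mu\bigl(A\cap (\mathcal{S}_\alpha^{(1)})^{-1}(A)\cap\cdots\cap (\mathcal{S}_\alpha^{(k)})^{-1}(A)\bigr)>0.$$
I would then define $q\in\qat^*$ to be the non-zero rational whose domain is $dom(q)=\{\tau_{n_1},\ldots,\tau_{n_l}\}\subset\zat^*$ and whose coefficients all equal $1$. This is a legitimate choice of data: each $\tau_{n_i}\in\zat^*$ satisfies $|\tau_{n_i}|\geq 1$, so the required bounds $1\leq q_{\tau_{n_i}}\leq|\tau_{n_i}|$ hold; and since $\alpha\neq\emptyset$, uniqueness of the representation from \cite{BIP} forces $q\neq 0$.

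It remains to identify $\mathcal{T}_j^q$ with $\mathcal{S}_\alpha^{(j)}$ for each $j$, after which the inequality above is exactly the desired conclusion. This is the only subtle point in the argument: $\mathcal{S}_\alpha^{(j)}=T_{\tau_{n_1},j}\cdots T_{\tau_{n_l},j}$ has its factors listed in the order of the enumeration indices $n_1<\cdots<n_l$, while $\mathcal{T}_j^q=\prod_{t\in dom(q)}T_{t,j}^{q_t}$ lists them in the natural order on $\zat^*$. The two products involve exactly the same factors (each appearing to the first power, since all coefficients equal $1$), so the intra-system commutativity of $\{T_{n,j}\}_{n\in\zat^*}$ permits the required reordering and yields $\mathcal{T}_j^q=\mathcal{S}_\alpha^{(j)}$. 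Once this bookkeeping is done, no further work is needed: the theorem is a direct transfer of Theorem~\ref{thm:1} along the enumeration $\tau$.
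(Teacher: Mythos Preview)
Your argument is correct and is essentially the paper's own proof: both reduce to Theorem~\ref{thm:1} by re-indexing $\zat^\ast$ by $\nat$ and reading an IP-index $\alpha$ back as a nonzero rational $q$. The only cosmetic difference is that the paper uses the explicit bijection $2t-1\mapsto t$, $2t\mapsto -t$ and allows an arbitrary admissible choice of coefficients $q_t$ (setting $\phi^{(i)}_n=T_{\cdot,i}^{q_\cdot}$), whereas you take an arbitrary bijection $\tau$ and fix all coefficients equal to $1$; either choice suffices for the theorem as stated.
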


\begin{proof}
Let $A\in \B$ with $\mu(A)>0.$  For every $t\in \zat^\ast$ choose a natural number $q_t$ with $1\leq q_t\leq |t|.$ For every $t\in \nat,$ $1\leq i\leq k,$ set $\phi^{(i)}_{2t-1}=T_{t,i}^{q_t},$ $\phi^{(i)}_{2t}=T_{-t,i}^{q_{-t}}$ and let the corresponding IP-systems $\{\phi^{(i)}_{\alpha}\}_{\alpha\in [\nat]^{<\omega}_{>0}}.$

According to Theorem~\ref{thm:1} there exists $\alpha=\{t_1<\ldots<t_l\}\in [\nat]^{<\omega}_{>0}$ with \begin{center} $\mu(A\cap \phi^{(1)-1}_{\alpha}(A)\cap\ldots \cap \phi^{(k)-1}_{\alpha}(A))>0.$\end{center} Since $\phi^{(i)}_{\alpha}=\mathcal{T}_i^q$ for all $1\leq i\leq k,$ where $$q=\sum_{2t\in \alpha}q_t\frac{(-1)^{-t}}{(-t+1)!}+\sum_{2t-1\in \alpha}q_t (-1)^{t+1}t!\in \qat^\ast,$$ we have that $\mu(A\cap (\mathcal{T}_1^q)^{-1}(A)\cap\ldots \cap (\mathcal{T}_k^q)^{-1}(A))>0.$
\end{proof}

\begin{remark}\label{r}
With the same arguments as in Theorem~\ref{thm:2}, we can prove that:

$(1)$ There exists $q\in \zat^\ast$ which satisfies the conclusion of Theorem~\ref{thm:2}, setting $\phi^{(i)}_{t}=T_{t,i}^{q_t}$ for every $t\in \nat$ and $1\leq i\leq k$.

$(2)$ There exists $q\in (e^{-1}-1,e^{-1})\cap \qat^{\ast}$ which satisfies the conclusion of Theorem~\ref{thm:2}, setting $\phi^{(i)}_{t}=T_{-t,i}^{q_{-t}}$ for every $t\in \nat$ and $1\leq i\leq k$.
\end{remark}

\section{Density conditions for rational numbers}

In this section, using Theorem~\ref{thm:2}, we will give, via left F{\o}lner sequences, a sufficient condition (in Theorem~\ref{thm:4}) in order a subset of rational numbers to contain arbitrary long arithmetic progressions, using a result of Hindman and Strauss (Theorem~\ref{thm:33}). Also, using Furstenberg-Katznelson's density Hales-Jewett theorem for words over a finite alphabet (Theorem~\ref{thm:6}), we prove in Theorem~\ref{l} a density result viewing the rational numbers as located words.

Firstly, we will define the left F{\o}lner sequences.

\begin{defn}\label{lfs}
Let $(S,+)$ be a semigroup. A \textit{left F{\o}lner sequence} in $[S]^{<\omega}_{>0}$ is a sequence $\{F_n\}_{n\in \nat}$ in $[S]^{<\omega}_{>0}$ such that for each $s\in S,$ $$\lim_{n\rightarrow\infty}\frac{|(s+F_n)\triangle F_n|}{|F_n|}=0,$$ where $A\triangle B=(A\setminus B)\cup (B\setminus A).$
\end{defn}

\begin{remark}[\cite{HS}]
If $(S,+)$ is an infinite countable left cancellative semigroup (i.e. $a+b=a+c\;\Rightarrow\;b=c$ for every $a,b,c\in S$), then we can find a left F{\o}lner sequence in $[S]^{<\omega}_{>0}.$
\end{remark}

Given a left F{\o}lner sequence $\F=\{F_n\}_{n\in \nat}$ in $[S]^{<\omega}_{>0},$ there is a natural notion of upper density associated with $\F,$ namely $$\overline{d}_{\F}(A)=\lim\sup_{n\rightarrow\infty}\frac{|A\cap F_n|}{|F_n|}.$$

\medskip

In order to prove Theorem~\ref{thm:4}, which gives a sufficient condition in order a subset of rational numbers to contain arbitrary long arithmetic progressions, we will need some notions from the theory of ultrafilters and also Theorem~\ref{thm:33}, a fundamental result of Hindman and Strauss, which we mention below.

\subsection*{Ultrafilters}
Let $X$ be a non-empty set. An \textit{ultrafilter} on the set $X$ is a zero-one finite additive measure $\mu$ defined on all the subsets of $X$. The set of all ultrafilters on the set $X$ is denoted by $\beta X.$ So, $\mu\in\beta X$ if and only if
\begin{itemize}
\item[{(i)}] $\mu(A)\in\{0,1\}$ for every $A\subseteq X$, $\mu(X)=1$, and
\item[{(ii)}] $\mu(A\cup B)=\mu(A)+\mu(B)$ for every $A,B\subseteq X$ with $A\cap B=\emptyset$.
\end{itemize}
For $\mu\in\beta X,$ it is easy to see that $\mu(A\cap B)=1,$ if $\mu(A)=1$ and $\mu(B)=1$. For every $x\in X$ is defined the \textit{principal ultrafilter} $\mu_x$ on $X$ which corresponds a set $A\subseteq X$ to $\mu_x(A)=1$ if $x\in A$ and  $\mu_x(A)=0$ if $x\notin A$.  So, $\mu$ is a non-principal ultrafilter on $X$ if and only if  $\mu(A)=0$ for every finite subset $A$ of $X$.

The set $\beta X$ becomes a compact Hausdorff space if it be endowed with the topology $\Tau$ which has basis the family $\{\overline{A} :  A\subseteq X \}$, where $\overline{A}=\{\mu\in\beta X : \mu(A)=1 \}$. It is easy to see that $\overline{A\cap B}=\overline{A}\cap \overline{B}$, $\overline{A\cup B}=\overline{A}\cup \overline{B}$ and $\overline{X\setminus A}=\beta X\setminus \overline{A}$ for every $A,B\subseteq X$. We always consider the set $\beta X$ endowed with the topology $\Tau.$ Also $\beta X$ is called the \textit{Stone-$\check{\text{C}}$ech compactification} of the set $X.$

If $(X,+)$ is a semigroup, then a binary operation $+$ is defined on $\beta X,$ extending the operation $+$ on $X,$ corresponding to every $\mu_1, \mu_2\in \beta X$ the ultrafilter $\mu_1 + \mu_2\in \beta X,$ with
\begin{center}
$(\mu_1 + \mu_2)(A)= \mu_1(\{x\in X : \mu_2(\{y\in X : x+y\in A\})=1\})$ for every $A\subseteq X$.
\end{center}
With this operation $(\beta X,+,\Tau)$ becomes a right topological semigroup, that is, for every $\mu\in\beta X$ the function $f_{x_0} : \beta X\rightarrow \beta X$ with $f_{x_0}(\mu)=\mu_{x_0}+\mu$ is continuous.

\medskip

Hindman and Strauss in \cite{HS} proved the following result concerning left cancellative semigroups. For $A\subseteq S$ and $t\in S$ we set $-t+A=\{s\in S:\;t+s\in A\}.$

\begin{thm}\label{thm:33}
Let $S$ be an infinite countable left cancellative semigroup, let $\F=\{F_n\}_{n\in \nat}$ be a left F${\o}$lner sequence in $[S]^{<\omega}_{>0},$ and let $A\subseteq S.$ There is a countably additive measure $\mu$ on the set $\B$ of Borel subsets of $\beta S$ such that

$(1)$ $\mu(\overline{A})=\overline{d}_{\F}(A),$

$(2)$ for all $B\subseteq S, $ $\mu(\overline{B})\leq \overline{d}_{\F}(B),$

$(3)$ for all $B\in \B$ and all $t\in S,$ $\mu(-t+B)=\mu(B)=\mu(t+B),$ and

$(4)$ $\mu(\beta S)=1.$
\end{thm}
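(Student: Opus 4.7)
The plan is to build $\mu$ as a weak$^*$ limit of normalized counting measures along the F{\o}lner sequence, viewed as probability measures on $\beta S$. Specifically, identify each $s\in S$ with its principal ultrafilter $\mu_s\in\beta S$ and define the Borel probability measure
\[
\mu_n=\frac{1}{|F_n|}\sum_{s\in F_n}\delta_{\mu_s}
\]
on $\beta S$. Since $\beta S$ is compact Hausdorff, the space of Borel probability measures on $\beta S$ is weak$^*$-compact (Banach--Alaoglu together with the Riesz representation theorem). The sequence $|A\cap F_n|/|F_n|$ lies in $[0,1]$ and has $\limsup=\overline{d}_{\F}(A)$, so we may choose a subsequence $(n_k)$ along which both $|A\cap F_{n_k}|/|F_{n_k}|\to\overline{d}_{\F}(A)$ and $\mu_{n_k}\to\mu$ weak$^*$.

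For properties $(1),(2),(4)$ I would use that the clopen sets of $\beta S$ are exactly the sets of the form $\overline{B}$ with $B\subseteq S$, so each $\chi_{\overline{B}}$ is continuous. Weak$^*$ convergence then gives
\[
\mu(\overline{B})=\lim_{k\to\infty}\mu_{n_k}(\overline{B})=\lim_{k\to\infty}\frac{|B\cap F_{n_k}|}{|F_{n_k}|},
\]
which equals $\overline{d}_{\F}(A)$ when $B=A$ (property $(1)$) and is bounded above by $\overline{d}_{\F}(B)$ for a general $B$ (property $(2)$). Property $(4)$ is immediate, since $\beta S=\overline{S}$ is clopen and $\mu_n(\beta S)=1$ for every $n$.

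For the invariance statement $(3)$ I would exploit the left F{\o}lner property through left cancellativity. Because $s\mapsto t+s$ is a bijection from $F_n$ onto $t+F_n$, we get $|F_n\cap(-t+B)|=|(t+F_n)\cap B|$ for every $B\subseteq S$ and $t\in S$, so
\[
\bigl|\mu_n(\overline{-t+B})-\mu_n(\overline{B})\bigr|\le\frac{|(t+F_n)\triangle F_n|}{|F_n|}\xrightarrow[n\to\infty]{}0.
\]
Passing to the weak$^*$ limit shows $\mu(\overline{-t+B})=\mu(\overline{B})$ for every clopen $\overline{B}$. Reinterpreting this as $(\lambda_t)_*\mu=\mu$ on the algebra of clopen sets (where $\lambda_t(\nu)=\mu_t+\nu$), a standard monotone-class/uniqueness-of-extension argument promotes the identity $\mu\circ\lambda_t^{-1}=\mu$ to all of $\mathcal{B}$, which is the content of $(3)$ for both the $-t+B$ and $t+B$ formulations.

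I expect the main obstacle to lie in $(3)$: the clopen algebra is easy, but extending translation invariance to every Borel subset of $\beta S$ requires care because $\lambda_t$ need not be injective nor open on $\beta S$ (so continuous images of Borel sets need not be Borel), and thus ``$t+B$'' is best understood via pushforward rather than as a set-theoretic image. A secondary subtlety is the simultaneous selection of the subsequence $(n_k)$ that both realizes the weak$^*$ limit and achieves the $\limsup$ defining $\overline{d}_{\F}(A)$; this is handled by first extracting along $A$ and then along a diagonal/subnet in the compact measure space.
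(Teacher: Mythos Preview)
The paper does not prove this theorem at all: it is quoted verbatim from Hindman and Strauss \cite{HS} and then used as a black box in the proof of Theorem~\ref{thm:4}. So there is no ``paper's own proof'' to compare against.

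That said, your outline is essentially the standard construction (and is, in substance, what Hindman and Strauss do): push the normalized counting measures $\mu_n$ on $F_n$ into $\beta S$, pass to a weak$^*$ accumulation point chosen so that the $\limsup$ for $A$ is realized, read off (1), (2), (4) on the clopen algebra, and get (3) from the F{\o}lner condition plus a $\pi$-system/monotone-class extension. Two small comments. First, the space of probability measures on $\beta S$ is weak$^*$-compact but not metrizable (since $C(\beta S)\cong\ell^\infty(S)$ is non-separable), so you cannot in general get a weak$^*$-convergent \emph{subsequence}; the clean fix, which you already gesture at, is to first pass to a subsequence realizing $\overline{d}_{\F}(A)$ and then take any weak$^*$ cluster point of that subsequence---properties (1)--(4) only need limits along a subnet. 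Second, your worry about $t+B$ is unnecessary here: left cancellativity of $S$ forces $\lambda_t(\nu)=\mu_t+\nu$ to be injective on $\beta S$ (since $-t+(t+C)=C$ for every $C\subseteq S$), hence a homeomorphism onto a closed subset, so $\lambda_t(B)$ is Borel and $\mu(t+B)=\mu(B)$ follows from $\mu\circ\lambda_t^{-1}=\mu$.
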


Now we can state our theorem and give a proof in analogy to Theorem 5.6 in \cite{HS}.

\begin{thm}\label{thm:4}
Let $\F=\{F_n\}_{n\in \nat}$ be a left F{\o}lner sequence in $[\qat]^{<\omega}_{>0},$ and let $A\subseteq \qat$ such that $\overline{d}_{\F}(A)>0.$ Then for each $k\in \nat$ there exists $q\in \qat^\ast$ such that \begin{center} $\overline{d}_{\F}(A\cap(-q+A)\cap\ldots\cap (-kq+A))>0.$ \end{center}
\end{thm}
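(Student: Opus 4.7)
The plan is to lift the problem to the Stone--\v{C}ech compactification $\beta\qat$ via the Hindman--Strauss result (Theorem~\ref{thm:33}), realise rational translations as a commuting family of $k$ rational systems of measure preserving transformations on $(\beta\qat,\B,\mu)$, and apply the multiple recurrence Theorem~\ref{thm:2}. Since $(\qat,+)$ is an infinite countable left cancellative semigroup, Theorem~\ref{thm:33} applied to $\F$ and $A$ produces a countably additive probability measure $\mu$ on the Borel $\sigma$-algebra $\B$ of $\beta\qat$ satisfying $\mu(\overline{A})=\overline{d}_{\F}(A)>0$, $\mu(\overline{B})\leq\overline{d}_{\F}(B)$ for every $B\subseteq\qat$, and $\mu(-t+B)=\mu(B)$ for all $B\in\B$ and $t\in\qat$.

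To set up the rational systems, I let $a_t\in\qat^\ast$ denote the basic rational attached to position $t$ in the word representation recalled in Section~1, namely $a_t=\frac{(-1)^{-t}}{(-t+1)!}$ if $t<0$ and $a_t=(-1)^{t+1}t!$ if $t>0$, so that for $q=q_{t_1}\ldots q_{t_l}\in\qat$ one has $q=\sum_{i} q_{t_i}a_{t_i}$. For each $1\leq j\leq k$ and $t\in\zat^\ast$ I define $T_{t,j}\colon\beta\qat\to\beta\qat$ by $T_{t,j}(\nu)=(ja_t)+\nu$. Each $T_{t,j}$ is continuous (left multiplication by a principal ultrafilter is continuous on the right topological semigroup $\beta\qat$) hence $\B$-measurable, is measure preserving by property~$(3)$ of Theorem~\ref{thm:33}, and the whole collection pairwise commutes because its elements are translations in the abelian group $\qat$. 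Taking $\{\mathcal{T}_j^q\}_{q\in\qat}$ to be the rational system generated by $\{T_{t,j}\}_{t\in\zat^\ast}$, I will then check that $\mathcal{T}_j^q(\nu)=(jq)+\nu$ for every $q\in\qat$, yielding $k$ pairwise commuting rational systems of measure preserving transformations on $(\beta\qat,\B,\mu)$.

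With these in hand, Theorem~\ref{thm:2} applied to $\overline{A}\in\B$ produces $q\in\qat^\ast$ with $\mu\bigl(\overline{A}\cap\bigcap_{j=1}^{k}(\mathcal{T}_j^q)^{-1}(\overline{A})\bigr)>0$. Using the identity $(\mu_t+\nu)(C)=\nu(-t+C)$, which gives $t+\nu\in\overline{C}$ iff $\nu\in\overline{-t+C}$, one has $(\mathcal{T}_j^q)^{-1}(\overline{A})=\overline{-jq+A}$; combined with $\overline{B_1}\cap\overline{B_2}=\overline{B_1\cap B_2}$, the intersection rewrites as $\overline{A\cap(-q+A)\cap\ldots\cap(-kq+A)}$, and property~$(2)$ of Theorem~\ref{thm:33} finally gives the desired $\overline{d}_{\F}(A\cap(-q+A)\cap\ldots\cap(-kq+A))>0$. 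The main obstacle I anticipate is verifying cleanly the identification $\mathcal{T}_j^q(\nu)=(jq)+\nu$: this requires matching the word representation of $q$ with the composition order in Definition~\ref{rmps}, keeping careful track of signs and of the commutativity of the translations $T_{t,j}$ on $\beta\qat$. Once this is secure, the remainder is a direct combination of Theorems~\ref{thm:33} and~\ref{thm:2} with the standard closure/translation duality in $\beta\qat$.
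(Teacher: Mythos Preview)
Your proposal is correct and follows essentially the same route as the paper: apply Theorem~\ref{thm:33} to obtain $\mu$ on $\beta\qat$, define the translations $T_{t,j}(\nu)=(ja_t)+\nu$ to build $k$ commuting rational systems on $(\beta\qat,\B,\mu)$, invoke Theorem~\ref{thm:2}, and unwind $(\mathcal{T}_j^q)^{-1}(\overline{A})=\overline{-jq+A}$. Your use of property~(2) to deduce $\overline{d}_{\F}\geq\mu$ at the end is in fact the careful form of the paper's final line (which writes an equality that, strictly speaking, is only guaranteed for the original set $A$).
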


\begin{proof}
Let $\B$ be the set of Borel subsets of $\beta \qat.$ Pick a countably additive measure $\mu$ on $\B$ which satisfies the conditions of Theorem~\ref{thm:33}. Let $k\in \nat.$ For $l\in \{1,\ldots,k\}$ and $\nu\in \beta \qat$ let $T_{l,m}(\nu)=\mu_{l(-1)^{m+1}m!}+\nu$ for $m\in \nat$ and $T_{l,m}(\nu)=\mu_{l\frac{(-1)^{-m}}{(-m+1)!}}+\nu$ for $-m\in \nat.$ Each $T_{l,n},$ $1\leq l\leq k,\;n\in \zat^\ast$ is a continuous function from $\beta\qat$ to $\beta\qat,$ since $\beta\qat$ is a right topological semigroup. Let $\{\mathcal{T}_l^{q}\}_{q\in\qat}$ be the rational system defined from $\{T_{l,n}\}_{n\in \zat^\ast}$ for every $1\leq l\leq k$ respectively.

The transformations $T_{l,n},$ $1\leq l\leq k,\;n\in \zat^\ast$ preserve $\mu,$ since $\mu$ satisfies the condition (3) of Theorem~\ref{thm:33}.  Consequently, $\{\mathcal{T}_l^{q}\}_{q\in\qat},$ $1\leq l\leq k$ are commuting rational systems of measure preserving transformations on the space $(\beta \qat,\B,\mu).$ According to the condition (1) of Theorem~\ref{thm:33}, we have $\mu(\overline{A})=\overline{d}_{\F}(A)>0.$ So, using Theorem~\ref{thm:2}, we can find $q\in \qat^\ast$ such that \begin{center} $\mu(\overline{A}\cap (\mathcal{T}_1^q)^{-1}(\overline{A})\cap\ldots \cap (\mathcal{T}_k^q)^{-1}(\overline{A}))>0.$ \end{center} This gives \begin{center} $\overline{d}_{\F}(A\cap(-q+A)\cap\ldots\cap (-kq+A))=\mu(\overline{A\cap(-q+A)\cap\ldots\cap (-kq+A)})=$\\ $ \;\;\;\;\;\;\;\;\;\; \;\;\;\;\;\;\;\;\;\; \;\;\;\;\;\;\;\;\;\; \;\;\;\;\;\;\;\;\;\; \;\;\;\;\;\;\;\;\;\; \;\;\;\;\;\;\;\;\;\;\;\;=\mu(\overline{A}\cap (\mathcal{T}_1^q)^{-1}(\overline{A})\cap\ldots \cap (\mathcal{T}_k^q)^{-1}(\overline{A}))>0,$ \end{center} which finishes the proof.
\end{proof}

\begin{cor}\label{c}
Let $\F=\{F_n\}_{n\in \nat}$ be a left F{\o}lner sequence in $[\qat]^{<\omega}_{>0},$ and let $A\subseteq \qat$ such that $\overline{d}_{\F}(A)>0.$ Then for each $k\in \nat$ there exist $q\in \qat^\ast$ and $p\in A$ such that \begin{center} $p+jq\in A$ for every $0\leq j\leq k.$ \end{center}
\end{cor}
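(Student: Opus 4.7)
The plan is to derive Corollary~\ref{c} as a direct consequence of Theorem~\ref{thm:4}. Given $A \subseteq \qat$ with $\overline{d}_{\F}(A) > 0$ and $k \in \nat$, I would first invoke Theorem~\ref{thm:4} to obtain a rational $q \in \qat^{\ast}$ such that
\[
\overline{d}_{\F}\bigl(A \cap (-q + A) \cap \cdots \cap (-kq + A)\bigr) > 0.
\]

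Next I would observe that any subset of $\qat$ with positive upper density with respect to a F{\o}lner sequence must be non-empty (in fact infinite): if $B \subseteq \qat$ is empty, then $|B \cap F_n| = 0$ for all $n$, forcing $\overline{d}_{\F}(B) = 0$. Applying this observation to $B = A \cap (-q + A) \cap \cdots \cap (-kq + A)$, I can pick any element $p$ of this intersection.

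Finally, I would unpack what membership of $p$ in this intersection means. Since $p \in A$, the case $j = 0$ is immediate. For $1 \leq j \leq k$, the condition $p \in -jq + A$ says precisely that $jq + p \in A$, and since $(\qat,+)$ is abelian this is the same as $p + jq \in A$. Thus $p + jq \in A$ for every $0 \leq j \leq k$, which is the desired arithmetic progression.

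There is no substantial obstacle here; the work has already been done in Theorem~\ref{thm:4}, and the corollary is the standard passage from a positive-density multiple-recurrence statement to the existence of a concrete arithmetic progression in $A$.
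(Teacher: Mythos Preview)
Your proof is correct and follows exactly the same approach as the paper: apply Theorem~\ref{thm:4} to get $q\in\qat^\ast$ with $\overline{d}_{\F}\bigl(A\cap(-q+A)\cap\cdots\cap(-kq+A)\bigr)>0$, then pick any $p$ in this (necessarily non-empty) intersection. The paper's version is just more terse, omitting the justification that positive density implies non-emptiness and the unpacking of what $p\in -jq+A$ means.
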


\begin{proof}
Let $k\in \nat.$ According to the proof of Theorem~\ref{thm:4} there exists $q\in \qat^\ast$ such that $\overline{d}_{\F}(A\cap(-q+A)\cap\ldots\cap (-kq+A))>0.$ Pick $p\in A\cap(-q+A)\cap\ldots\cap (-kq+A).$
\end{proof}

\begin{remarks} $(1)$ In the statements of Theorem~\ref{thm:4} and Corollary~\ref{c} the rational number $q$ can be located either in $\zat^\ast$ or in $(e^{-1}-1,e^{-1})\cap\qat^{\ast},$ using in the respective proof the results of Remark~\ref{r}.

$(2)$ Defining for a left F{\o}lner sequence $\F=\{F_n\}_{n\in \nat}$ in $[\qat]^{<\omega}_{>0},$   and $A\subseteq \qat$  the density \begin{center} $d^{\ast}_{\F}(A)=\sup\big\{\alpha:\;(\forall m\in \nat)(\exists n\geq m)(\exists q\in \qat)(|A\cap (q+F_n)|\geq \alpha |F_n|)\big\}$ \end{center} we can replace $\overline{d}_{\F}$ with $d^{\ast}_{\F}$ in the statements of Theorem~\ref{thm:4} and Corollary~\ref{c}, using  Theorem 4.6 in \cite{HS}, instead of Theorem~\ref{thm:33}.
\end{remarks}

Viewing the rational numbers as words and using the density Hales-Jewett theorem of Furstenberg and Katznelson (\cite{FuKa}), we will prove in Theorem~\ref{l} another density result for the set of rational numbers. Let start with the necessary notation.

Let $\Sigma=\{\alpha_1,\ldots,\alpha_k\}$ for $k\in \nat$ a finite set and $\upsilon\notin\Sigma.$ We denote by $W(\Sigma)$ the set of all the words $w=w_1\ldots w_n,$ where $n\in \nat$ and $w(i)\in \Sigma$ for every $1\leq i\leq n,$ and by $W(\Sigma,\upsilon)$ the set of all the (variable) words in $W(\Sigma\cup\{\upsilon\})$ with at least one occurrence of the symbol $\upsilon.$ A \textit{combinatorial line} in $W(\Sigma)$ is a set $\{w(\alpha):\;\alpha\in \Sigma\}$ obtained by substituting the variable $\upsilon$  of the variable word $w(\upsilon)$ by the symbols $\alpha_1,\ldots,\alpha_k.$ We also denote by $W_n(\Sigma)$ the subset of $W(\Sigma)$ consisting of all the words of length $n.$

Furstenberg and Katznelson in \cite{FuKa2} proved the following theorem:

\begin{thm}\label{thm:6}
Let $\Sigma=\{\alpha_1,\ldots,\alpha_k\},\;k\in \nat$ a finite alphabet. If $A\subseteq W(\Sigma)$ and $\limsup_n \frac{|A\cap W_n(\Sigma)|}{k^{n}}>0,$  then $A$ contains a combinatorial line.
\end{thm}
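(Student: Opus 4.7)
The plan is to follow a Furstenberg-style correspondence principle, reducing this combinatorial density statement to the multiple IP-recurrence theorem for measure-preserving transformations already recorded as Theorem~\ref{thm:1}. Given $A\subseteq W(\Sigma)$ with $\limsup_n |A\cap W_n(\Sigma)|/k^n>0$, I aim to transport $A$ to a positive-measure subset $\widetilde A$ of a probability space on which $k$ commuting IP-systems act by measure-preserving transformations, in such a way that a simultaneous return event $\mu(\widetilde A\cap (\mathcal{T}^{(1)}_\beta)^{-1}\widetilde A\cap\cdots\cap(\mathcal{T}^{(k)}_\beta)^{-1}\widetilde A)>0$ decodes into a combinatorial line inside $A$.

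First, I would build the ambient probability space via a Stone--\v{C}ech-style compactification of $W(\Sigma)$ (mimicking the apparatus behind Theorem~\ref{thm:33}), producing a shift-invariant probability $\mu$ with $\mu(\widetilde A)\geq \limsup_n |A\cap W_n(\Sigma)|/k^n>0$, where $\widetilde A$ is the clopen set attached to $A$. For each letter $\alpha_i\in\Sigma$ and each $\beta=\{t_1<\cdots<t_l\}\in[\nat]^{<\omega}_{>0}$ I would define a transformation $\mathcal{T}^{(i)}_\beta$ that substitutes $\alpha_i$ at the coordinates listed in $\beta$ of a variable word; the resulting family is an IP-system generated by the elementary ``insert $\alpha_i$ at slot $t$'' operators. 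Second, I would verify that each $\mathcal{T}^{(i)}_\beta$ preserves $\mu$ (using invariance of the $\limsup$-density under coordinate-wise letter substitution in words of growing length) and that the $k$ IP-systems mutually commute (substitutions at disjoint coordinates commute letter-by-letter, and distinct letters act on disjoint slot-sets). Third, applying Theorem~\ref{thm:1} to $\widetilde A$ yields a $\beta$ with the required positive joint recurrence; pulling back through the correspondence produces a variable word whose $k$ specializations to $\alpha_1,\ldots,\alpha_k$ all lie in $A$, which is exactly a combinatorial line.

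The main obstacle is the correspondence together with commutativity: unlike the integer setting, ``substitute $\alpha_i$ at positions $\beta$'' is not a self-map on words of a fixed length, and the naively defined substitution operators for different letters do not act on the same object. One must therefore work in an inverse limit or compactification that makes all substitutions coherent, and, critically, extract a genuinely invariant measure of the right mass from the density hypothesis. This is the technical heart of Furstenberg--Katznelson's argument in \cite{FuKa2}, which moreover requires an IP-structure theory (compact and weakly mixing extensions along IP-directions) beyond what Theorem~\ref{thm:1} formally supplies; in reality Theorem~\ref{thm:6} and Theorem~\ref{thm:1} are companion results proved by the same structural machinery, rather than one being a short formal consequence of the other, so any honest proof sketch reduces to citing that machinery at the decisive step.
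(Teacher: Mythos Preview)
The paper does not prove Theorem~\ref{thm:6}; it is quoted verbatim from Furstenberg--Katznelson \cite{FuKa2} and used as a black box in the proof of Theorem~\ref{l}. There is therefore no ``paper's own proof'' to compare your proposal against.

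As for the proposal itself: your high-level outline (correspondence principle plus IP multiple recurrence) is the right conceptual neighborhood, but as you yourself concede in the final paragraph, it is not a proof. The density Hales--Jewett theorem is not known to follow from Theorem~\ref{thm:1} by a short correspondence argument; in \cite{FuKa2} it is established by a separate and substantially more elaborate IP-structure theory (compact and weakly mixing extensions relative to IP-limits, an IP-Szemer\'edi argument on towers of extensions), and your sketch does not supply any of that. In particular, the step where you ``verify that each $\mathcal{T}^{(i)}_\beta$ preserves $\mu$'' and that the $k$ IP-systems commute is precisely where the naive approach breaks down: substitution of different letters at the \emph{same} coordinate does not commute, so you do not get $k$ commuting IP-systems in the sense required by Theorem~\ref{thm:1}. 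The honest summary is that your proposal reduces to ``cite \cite{FuKa2}'', which is exactly what the paper does.
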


\noindent  For every $n,\;k\in \nat$ and integers $t^n_1<\ldots<t_n^n$ with $|t^n_j|\geq k$ for every $1\leq j\leq n$ we define, in analogy to the representation of rational numbers, the subset $\qat(t^n_1,\ldots,t_n^n,k)\subseteq \qat^{\ast}$ as   \begin{center}$\qat(t^n_1,\ldots,t_n^n,k)=\{\sum^{n}_{j=1}q_{t_j^n}c_{t_j^n}:$ $1\leq q_{t_j^n}\leq k,$ $c_{t_j^n}=\frac{(-1)^{-t_j^n}}{(-t_j^n+1)!}$ if $t_j^n<0$\\ $\;\;\;\;\;$ and  $c_{t_j^n}=(-1)^{t_j^n+1}t_j^n!$ if $t_j^n>0 \}.$\end{center}

\medskip

For $\Sigma=\{1,\ldots,k\},\;k\in \nat$ we define  \begin{center} $g:W(\Sigma)\rightarrow \bigcup_{n\in \nat} \qat(t^n_1,\ldots,t_n^n,k)$ with $g(w_1\ldots w_n)=\sum^{n}_{j=1}w_jc_{t_j^n}.$ \end{center} Note that $g|_{W_n(\Sigma)}:W_n(\Sigma)\rightarrow \qat(t^n_1,\ldots,t_n^n,k)$ is $1-1$ and onto for every $n\in \nat.$

Using Theorem~\ref{thm:6} we have the following density theorem:

\begin{thm}\label{l}
Let $k\in \nat$ and a sequence $((t^n_1,\ldots,t_n^n))_{n\in \nat}$ with $t^n_1<\ldots<t_n^n$ and $|t^n_j|\geq k$ for every $1\leq j\leq n,$ $n\in \nat.$ If $A\subseteq \qat$ with $\limsup_{n}\frac{|A\cap \qat(t^n_1,\ldots,t_n^n,k)|}{k^{n}}>0,$ then there exist $p\in A$ and $q\in \qat^\ast$ with $dom(p),$ $dom(q)\subseteq \{t^n_1,\ldots,t_n^n\}$ for some $n\in \nat,$ such that \begin{center} $p+iq\in A$ for every $i=0,1,\ldots,k-1.$\end{center}
\end{thm}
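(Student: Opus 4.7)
The plan is to transfer the density condition on $A \subseteq \qat$ to a density condition on a subset of $W(\Sigma)$, where $\Sigma = \{1,\ldots,k\}$, apply Furstenberg--Katznelson's density Hales--Jewett theorem (Theorem~\ref{thm:6}) to extract a combinatorial line, and finally translate that combinatorial line back into an arithmetic progression in $A$.

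First I would set $\Sigma = \{1,\ldots,k\}$ and exploit the bijection $g|_{W_n(\Sigma)} : W_n(\Sigma) \to \qat(t^n_1,\ldots,t^n_n,k)$ (which is defined since the condition $|t^n_j| \geq k$ guarantees that every coefficient in $\{1,\ldots,k\}$ is admissible as a digit in the respective position). Put
\[
B \;=\; \bigcup_{n\in\nat}\;\bigl(g|_{W_n(\Sigma)}\bigr)^{-1}\!\bigl(A\cap \qat(t^n_1,\ldots,t^n_n,k)\bigr)\;\subseteq\; W(\Sigma).
\]
Since $g|_{W_n(\Sigma)}$ is a bijection, $|B\cap W_n(\Sigma)| = |A\cap \qat(t^n_1,\ldots,t^n_n,k)|$, so the hypothesis $\limsup_n |A\cap \qat(t^n_1,\ldots,t^n_n,k)|/k^n > 0$ is exactly $\limsup_n |B\cap W_n(\Sigma)|/k^n > 0$.

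Next, I would apply Theorem~\ref{thm:6} to obtain a combinatorial line in $B$: a variable word $w(\upsilon)\in W_n(\Sigma,\upsilon)$, for some $n$, such that $\{w(\alpha):\alpha\in\Sigma\}\subseteq B$. Write $I\subseteq\{1,\ldots,n\}$ for the set of positions occupied by $\upsilon$ (so $I\neq\emptyset$), and write $w(\upsilon)_j=a_j\in\Sigma$ for $j\notin I$. Then by definition of $g$,
\[
g(w(\alpha)) \;=\; \sum_{j\notin I} a_j\, c_{t^n_j} \;+\; \alpha\sum_{j\in I} c_{t^n_j}.
\]
I set $p := g(w(1))$ and $q := \sum_{j\in I} c_{t^n_j}$; then $g(w(i+1)) = p + iq$ for $i=0,1,\ldots,k-1$, each lying in $A$ by construction. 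Clearly $dom(p),\,dom(q)\subseteq\{t^n_1,\ldots,t^n_n\}$.

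The only non-routine step is verifying that $q\in\qat^\ast$, i.e.\ that $q\neq 0$. This is where the uniqueness of the representation of rational numbers recorded at the start of Section~1 is essential: since $I\neq\emptyset$ and the coefficients on the terms $c_{t^n_j}$ for $j\in I$ are all equal to $1$ (hence in the admissible range $1\le 1\le |t^n_j|$ because $|t^n_j|\ge k\ge 1$), the sum $q$ is precisely the represented rational whose digit sequence has a $1$ at each position $t^n_j$ with $j\in I$ and $0$ elsewhere; by uniqueness this represented number cannot be $0$. This settles the only potential obstacle, and completes the proof.
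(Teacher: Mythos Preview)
Your proof is correct and follows essentially the same route as the paper: pull back $A$ through the bijections $g|_{W_n(\Sigma)}$, apply the density Hales--Jewett theorem to obtain a combinatorial line, and push it forward to an arithmetic progression in $A$. Your treatment is in fact slightly more careful than the paper's, both in the indexing (you write $g(w(i+1))=p+iq$ for $i=0,\ldots,k-1$, avoiding the paper's implicit identification) and in explicitly verifying $q\neq 0$ via uniqueness of the digit representation, a point the paper leaves tacit.
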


\begin{proof}
Let $\Sigma=\{1,\ldots,k\}.$ Since $|g^{-1}(A)\cap W_n(\Sigma)|=|A\cap \qat(t^n_1,\ldots,t_n^n,k)|$ for every $n\in \nat,$ the set $g^{-1}(A)$ contains a combinatorial line $\{w(\alpha):\;\alpha\in \Sigma\}$ obtained by a variable word $w(\upsilon),$ $\upsilon\notin \Sigma,$ according to Theorem~\ref{thm:6}. Let $n$ be the length of $w(\upsilon).$ Then, $\{g(w(\alpha)):\;\alpha\in \Sigma\}\subseteq A\cap \qat(t^n_1,\ldots,t_n^n,k).$ So, there exist $F_1=\{t\in dom(w):w_t\in \Sigma\},$ $F_2=\{t\in dom(w):w_t=\upsilon\}$ with $F_1,\;F_2\subseteq \{t^n_1,\ldots,t_n^n\}$ and $F_1\cap F_2=\emptyset$ such that if  $q=\sum_{t\in F_2}c_t$ and $p=q+\sum_{t\in F_1}w_tc_t,$ where $c_{t}=\frac{(-1)^{-t}}{(-t+1)!}$ if $t<0$ and  $c_{t}=(-1)^{t+1}t!$ if $t>0,$  we have that $g(w(i))=p+iq\in A$ for every $0\leq i\leq k-1.$
\end{proof}

\medskip

\section*{Acknowledgments}

The author wish to thank Professor V. Farmaki for helpful discussions and support during the preparation of this paper.

\bigskip
{\footnotesize

\noindent
\newline
Andreas Koutsogiannis:
\newline
{\sc Department of Mathematics, Athens University, Panepistemiopolis, 15784 Athens, Greece}
\newline
E-mail address: akoutsos@math.uoa.gr}
\end{document}